 \newtheorem{theorem}{Theorem}[section]
 \newtheorem{lemma}[theorem]{Lemma}
 \newcommand{\mbb}{\mathbb}
 \newcommand{\R}{\mbb{R}}
\begin{document}

\title{Simultaneous Diophantine approximation on lines with prime constraints}

\author{Stephan Baier}
\author{Anish Ghosh} 
\address{S. Baier, A. Ghosh, School of Mathematics, Tata Institute of Fundamental Research, Homi Bhabha Road, Colaba, Mumbai, India 400005}
\thanks{Baier is supported by an ISF-UGC grant. Ghosh is supported by an ISF-UGC grant. This material is based upon work supported by the National Science Foundation under Grant No. 0932078 000 while Ghosh was in residence at the Mathematical Sciences Research Institute in Berkeley, California, during the Spring 2015 semester.}

\email{sbaier@math.tifr.res.in} 
\email{ghosh@math.tifr.res.in}

\subjclass[2000]{11J83, 11K60, 11L07}
\title{Restricted simultaneous Diophantine approximation}

\begin{abstract}
We study the problem of Diophantine approximation on lines in $\R^d$ under certain primality restrictions. 
\end{abstract}

\subjclass[2000]{11J83, 11K60, 11L07}

\maketitle

{\small
\tableofcontents

\section{Introduction}

The subject of \emph{metric Diophantine approximation on manifolds} studies Diophantine approximation of typical points on submanifolds in $\R^d$ by rational points in $\R^d$. This subject has received considerable attention in the last two decades, leading to dramatic progress using methods arising from the ergodic theory of flows on homogeneous spaces as well as analytic methods. If one puts further restrictions on the approximating rationals, then the situation is much less understood. A very natural class of problems arises by imposing primality restrictions on the approximating rationals. In this paper, we study the problem of Diophantine approximation for vectors on lines in $\R^d$ with additional primality restrictions. Thus we combine the themes of simultaneous metric Diophantine approximation \emph{on affine subspaces}, and Diophantine approximation with restrictions. Both problems have their own substantial complications and have separately received considerable attention, cf. \cite{BBDD, Ghosh, Kl1} for Diophantine approximation on affine subspaces, and \cite{HarmK, Harm, Harm-book, HeJi, Jones} for Diophantine approximation with primality constraints.  The only previous works on the combined theme that we are aware of are the work of Harman-Jones \cite{HaJo} regarding Diophantine approximation on curves in $\R^2$ with constraints, and our previous work \cite{BG} where we addressed the problem of Diophantine approximation with constraints on lines in $\R^2$. The main result of the present paper generalises \cite{BG} to arbitrary dimensions and is the first such result in this generality. While the broad strategy in the present paper is similar to that of \cite{BG}, the greater generality makes the problem significantly more complicated. 
In studying Diophantine approximation on lines, and more generally, affine subspaces, it is natural and indeed imperative to impose some Diophantine condition on the line or subspace itself. In \cite{BG}, we had assumed that the slope of the line in $\R^2$ is irrational. In higher dimensions, the lack of a suitable continued fraction algorithm makes it necessary to replace irrationality with a suitable Diophantine condition which we now introduce. Let $\|~\|$ denote the distance to the nearest integer of a real number,  $\|~\|_{\infty}$ denote the supremum norm of a vector and for vectors ${\bf v}, {\bf c} \in \R^d$, denote by ${\bf v} \cdot {\bf c}=v_1c_1+...+v_dc_d$ the inner product of ${\bf v}$ and ${\bf c}$.  Recall that ${\bf c} \in \R^d$ is called $k$-Diophantine (${\bf c} \in D_{k}(\R^d)$) if there exists a constant $C>0$ such that
\begin{equation} \label{condi}
||{\bf v}\cdot {\bf c}||>\frac{C}{||{\bf v}||_{\infty}^k} \quad \mbox{for every } {\bf v}\in \mathbb{Z}^d\setminus \{{\bf 0}\}.
\end{equation}
Our main result is:

\begin{theorem} \label{super}
Let $d$ be a positive integer and $k\ge d$ be a positive real number. Define
\begin{equation} \label{gammadef}
\gamma_{d,k}:=\frac{1}{d(3k+2)}
\end{equation}
and suppose that $0<\varepsilon<\gamma_{d,k}$.
Let $c_1,...,c_d$ be positive irrational numbers such that the vector ${\bf c}=(c_1,...,c_d)$ is $k$-Diophantine. Then for almost all positive real $\alpha$, with respect to the Lebesgue measure, there are infinitely many $(d+2)$-tuples $(p,q_1,...,q_d,r)$ with $p$ and $r$ prime and $q_1,...,q_d$ positive integers such that simultaneously
\begin{equation} \label{simultan}
\begin{split}
0 & <p\alpha-r<p^{-\gamma_{d,k}+\varepsilon},\\
0 & <pc_i\alpha-q_i<p^{-\gamma_{d,k}+\varepsilon} \mbox{ for all } i\in \{1,...,d\}.
\end{split}
\end{equation}
\end{theorem}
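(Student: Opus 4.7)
The approach is a divergent Borel--Cantelli argument combined with an exponential sum estimate for primes that exploits the $k$-Diophantine hypothesis on $\mathbf{c}$. Fix a compact interval $I\subset (0,\infty)$ and, for each prime $p$, set
\[
A_p \;=\; \bigl\{\alpha\in I : \exists\,r\text{ prime},\;q_1,\ldots,q_d\in\Z_{>0}\text{ satisfying \eqref{simultan}}\bigr\}.
\]
The goal is to prove $\sum_p|A_p|=\infty$ together with a quasi-independence bound for pairs $(p,p')$, from which a quantitative Borel--Cantelli lemma will yield that almost every $\alpha\in I$ lies in infinitely many $A_p$.

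Given primes $p$ and $r$ with $r\asymp p\alpha$, the first inequality in \eqref{simultan} confines $\alpha$ to an interval $J_{p,r}$ of length $p^{-1-\gamma_{d,k}+\varepsilon}$. A direct computation then shows that the remaining $d$ inequalities can be simultaneously satisfied for some $\alpha\in J_{p,r}$ precisely when the vector of fractional parts $(\{c_1r\},\ldots,\{c_dr\})$ lands in a specific box on the $d$-torus of sidelength $\asymp p^{-\gamma_{d,k}+\varepsilon}$. Consequently,
\[
|A_p|\;\asymp\;p^{-1-\gamma_{d,k}+\varepsilon}\cdot N_p, \qquad N_p \;=\; \#\bigl\{r \text{ prime}: r\asymp p,\; (\{c_ir\})_{i=1}^d\in B_p\bigr\}.
\]
The heuristic value of $N_p$ is $p\cdot p^{-d\gamma_{d,k}+d\varepsilon}/\log p$, which would yield $|A_p|\asymp p^{-(d+1)\gamma_{d,k}+(d+1)\varepsilon}/\log p$ and, by Mertens' theorem, divergence of $\sum_p|A_p|$.

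The heart of the proof is establishing this lower bound for $N_p$ rigorously. Expanding the characteristic function of $B_p$ into a Fourier series (or a pair of smooth majorant/minorant approximations) and isolating the main term $\mathbf{v}=\mathbf{0}$ reduces the problem to bounding the exponential sum
\[
S(\mathbf{v})\;=\;\sum_{\substack{r \text{ prime}\\ r\asymp p}}e\bigl(r\,\mathbf{v}\cdot\mathbf{c}\bigr),\qquad \mathbf{v}\in \Z^d\setminus\{\mathbf{0}\},
\]
uniformly over $\|\mathbf{v}\|_\infty\le H$ for a suitable truncation $H$. The Diophantine condition \eqref{condi} controls the rational approximability of $\mathbf{v}\cdot\mathbf{c}$: if $|\mathbf{v}\cdot\mathbf{c}-a/q|\le 1/(qp)$ with $\gcd(a,q)=1$, then $q\gg p^{1/k}\|\mathbf{v}\|_\infty^{-1}$. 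Inserting this lower bound on $q$ into Vaughan's identity and invoking the Vinogradov--Vaughan machinery for exponential sums over primes produces the desired power saving for $S(\mathbf{v})$, at the cost of a polynomial loss in $\|\mathbf{v}\|_\infty$. The particular value $\gamma_{d,k}=1/(d(3k+2))$ should emerge from optimising this saving against the sum over $\mathbf{v}$ via an Erd\H{o}s--Tur\'an type inequality on $\mathbb{T}^d$.

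The remaining step is quasi-independence. For distinct primes $p\ne p'$, the sub-intervals of $I$ contributing to $A_p$ and $A_{p'}$ are governed by essentially decoupled congruence and fractional-part conditions, and one expects $|A_p\cap A_{p'}|\ll |A_p|\,|A_{p'}|/|I|$ up to a controllable error term (which itself is amenable to the same exponential sum analysis, now in the two variables $r$ and $r'$). A variance argument applied to $\sum_{p\le X}\mathbf{1}_{A_p}$ then upgrades divergence into the almost-sure infinitude. The principal technical obstacle is the exponential sum estimate for $S(\mathbf{v})$: the Diophantine lever pays for itself with a factor $\|\mathbf{v}\|_\infty^{k}$, and propagating this inflation through the Vaughan decomposition while retaining enough savings to absorb the $d$-fold Fourier sum over $\mathbf{v}\in\Z^d$ is where the bookkeeping — and the ultimate constraint on $\gamma_{d,k}$ — will be most delicate.
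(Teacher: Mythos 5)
Your first--moment analysis is essentially the paper's: the measure of $A_p$ is reduced to counting primes $r\asymp p$ with $(\{c_1r\},\ldots,\{c_dr\})$ in a box of side $\asymp p^{-\gamma_{d,k}+\varepsilon}$, and that count is handled by Fourier truncation (Vaaler/Erd\H{o}s--Tur\'an), Vaughan's identity for the sums $\sum_{n}\Lambda(n)e(n\,\mathbf{v}\cdot\mathbf{c})$, and a Dirichlet approximation to $\mathbf{v}\cdot\mathbf{c}$ whose denominator is bounded below by $\gg X^{1/k}\|\mathbf{v}\|_\infty^{-1}$ via \eqref{condi}; this is exactly how the paper proves Theorem \ref{Theo}(i) and how the exponent $1/(3k+2)$, hence $\gamma_{d,k}$, arises. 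The genuine gap is in your second step. You propose quasi-independence $|A_p\cap A_{p'}|\ll |A_p||A_{p'}|/|I|$ followed by ``a variance argument.'' First, the intersection $A_p\cap A_{p'}$ forces $r/p$ and $r'/p'$ to agree to within $\asymp p^{-1-\gamma_{d,k}+\varepsilon}$, i.e.\ it requires counting pairs of primes $(r,r')$ with $rp'-r'p$ confined to a short range: a binary problem for which only sieve \emph{upper} bounds of the correct order, with an absolute constant $C>1$, are available — no asymptotic. Second, and decisively, with such a constant the Kochen--Stone/Erd\H{o}s--R\'enyi form of Borel--Cantelli yields only $\lambda(\limsup_p A_p)\ge \lambda(I)/C>0$, not full measure; a variance argument needs asymptotic pairwise independence, which you cannot supply here, and no zero--one law is known for this limsup set (the prime constraint on $r$ and the multiplicative coupling through $p\alpha$ break the Cassels/Gallagher-type invariance arguments). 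So as written your plan proves a positive-measure statement, not the almost-everywhere statement of Theorem \ref{super}.

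The paper avoids this obstruction by replacing pairwise correlations with the metrical lemma of Harman--Jones (Lemma \ref{Hajolemma}): besides the first-moment lower bound, one needs only a \emph{pointwise} upper bound $F_N(\alpha)\le KG_N+J_N(\alpha)$ valid for every $\alpha$, with $\int_A^B|J_N|=o(G_N)$; the constant $K$ is harmless because the lemma is engineered to give full measure from exactly these two inputs. That pointwise bound is obtained not from prime pair correlations but from a two-dimensional upper-bound sieve (Halberstam--Richert) applied to the integers $n[n\alpha]$ with the fractional-part constraints, which requires an equidistribution statement for $n$ in the congruence classes $n\equiv 0 \bmod t_1$, $[n\alpha]\equiv 0\bmod t_2$; its error term $E(\alpha;t_1,t_2)$ is again an exponential sum in $\alpha$, and it is controlled only \emph{on average over} $\alpha$ via Lemma \ref{intlemma} and the same $k$-Diophantine input. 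If you want to salvage your route, you would have to either prove a localized quasi-independence on all subintervals with constant $1+o(1)$ (out of reach for the prime-pair counts above) or import precisely this sieve-plus-$L^1$-error mechanism, at which point you have reproduced the paper's proof of Theorem \ref{Theo}(ii).
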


\noindent {\bf Remarks}:
\begin{enumerate}

\item It is well known that $D_{d}(\R^d)$ is a nonempty set of zero Lebesgue measure and full Hausdorff dimension. These comprise the set of \emph{badly approximable} vectors. Moreover, $D_{k}(\R^d)$ has full measure whenever $k > d$, see \cite{Cassels} for example.\\

\item In \cite{BG}, we proved the analogue of Theorem \ref{super} for lines in $\R^2$ under the assumption that the slope $c$ of the line is irrational. In fact, what was used was the following Diophantine property of irrational numbers:  There exists an infinite set $\mathcal{S}$ of integers  and a positive constant $D>0$ such that
\begin{equation} \label{nustatement}
\min\limits_{\substack{{\bf v}\in \mathbb{Z}^d\setminus \{{\bf 0}\}\\ ||{\bf v}||_{\infty}\le N}} || {\bf v} \cdot {\bf c}|| \ge N^{-D} \quad \mbox{if} \quad N\in \mathcal{S}.
\end{equation}

\noindent For $d=1$ this holds for $D=1$ and $\mathcal{S}$ being the set of numbers $[N/2]$, where the $N$'s are the denominators in the continued fraction approximants of $c=c_1$. This can be seen by approximating $c$ by its continued fraction approximant $a/N$ and using the fact that  $|c-a/N|\le 1/N^2$. In the case $d=1$, a sequence $\mathcal{S}$ with the above property can also be constructed using the Dirichlet approximation theorem and the condition that $c$ is irrational. There is a $d$-dimensional version of Dirichlet's approximation theorem, but for $d\ge 2$, a statement like \eqref{nustatement} does not follow from it. It is likely that the conclusion of Theorem \ref{super} would also hold in arbitrary dimension by assuming (\ref{nustatement}) rather than the Diophantine condition we have assumed, with perhaps a different exponent. We note however, that for $d = 1$, choosing $k=1$, we recover the exponent in \cite{BG}. The condition (\ref{nustatement}) is also an interesting Diophantine property, and can be shown to include the class of \emph{nonsingular} vectors.\\

\item It is an interesting problem to consider analogues of Theorem \ref{super} for affine subspaces of lower codimension, and indeed for manifolds not contained in affine subspaces, the \emph{non degenerate} manifolds. We will consider these in a forthcoming work. 

\end{enumerate}

\subsection*{Acknowledgements} S. Baier wishes to thank the Tata Institute of Fundamental Research in Mumbai (India) for its warm hospitality and excellent working conditions. This work was completed while Ghosh was a visiting professor at the Technion, Israel Institute of Technology and a member at MSRI Berkeley. The hospitality of both institutions is gratefully acknowledged.

\section{A Metrical approach}
Our method is based on the following lemma in \cite{HaJo}. 

\begin{lemma} \cite[Lemma 1]{HaJo} \label{Hajolemma} Let $A$ and $B$ be reals with $B > A > 0$. Let $F_N(\alpha)$ be a nonnegative
valued function of N (an integer) and $\alpha$ (a real variable), and $G_N$, $V_N$
functions of N such that:\\

(i) $G_N\rightarrow \infty$\quad as $N\rightarrow \infty$,\\ 

(ii) $V_N=o\left(G_N\right)$ \quad as $N\rightarrow \infty$,\\

(iii) for all $a$, $b$ with $A\le a<b\le B$ we have
$$
\limsup\limits_{N\rightarrow \infty} \int\limits_a^b \frac{F_N(\alpha)}{G_N} \ d\alpha \ge b-a
$$

(iv) there is a positive constant K such that, for any measurable set $\mathcal{C}\subseteq [A,B]$, 
$$
\int\limits_{\mathcal{C}} F_N(\alpha) \ d\alpha \le KG_N\lambda(\mathcal{C})+V_N.
$$

Then, for almost all $\alpha \in [A,B]$, we have 
\begin{equation} \label{endresult}
\limsup\limits_{N\rightarrow \infty} \frac{F_N(\alpha)}{G_N} \ge 1. 
\end{equation}
\end{lemma}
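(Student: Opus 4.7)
The plan is to argue by contradiction: suppose the exceptional set
$$E=\{\alpha\in[A,B]:\limsup_{N\to\infty} F_N(\alpha)/G_N<1\}$$
has positive Lebesgue measure, and derive a contradiction by playing hypotheses (iii) and (iv) against each other on a suitably chosen sub-interval.

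First I would decompose $E$ countably, writing $E=\bigcup_{n,M}E_{n,M}$ where
$$E_{n,M}=\{\alpha\in[A,B]:F_N(\alpha)\le(1-1/n)G_N\text{ for all }N\ge M\}.$$
If $\lambda(E)>0$ then some $E_{n,M}$ has positive measure, so by the Lebesgue density theorem it has a density point $\alpha_0$. Given any small $\rho>0$, I can therefore pick an interval $[a,b]\subseteq[A,B]$ containing $\alpha_0$ with $\lambda([a,b]\setminus E_{n,M})\le\rho(b-a)$.

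The key step is then to split the integral $\int_a^b F_N\,d\alpha$ over $[a,b]\cap E_{n,M}$ and its complement inside $[a,b]$. On the first piece, for $N\ge M$ the pointwise bound from the definition of $E_{n,M}$ gives
$$\int_{[a,b]\cap E_{n,M}}F_N\,d\alpha\le(1-1/n)G_N(b-a),$$
while on the second piece hypothesis (iv) applied to $\mathcal{C}=[a,b]\setminus E_{n,M}$ yields
$$\int_{[a,b]\setminus E_{n,M}}F_N\,d\alpha\le K G_N\rho(b-a)+V_N.$$
Summing, dividing by $G_N$, letting $N\to\infty$ and using $V_N=o(G_N)$ from (ii), the $\limsup$ on the left is at least $b-a$ by (iii), hence
$$b-a\le(1-1/n)(b-a)+K\rho(b-a),$$
i.e.\ $1/n\le K\rho$. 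Choosing $\rho<1/(nK)$ at the outset gives the desired contradiction, forcing $\lambda(E)=0$ and establishing \eqref{endresult} almost everywhere.

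I expect the only delicate point to be the passage from the pointwise limsup condition defining $E$ to the uniform-in-$N$ condition defining $E_{n,M}$; this is handled cleanly by the countable decomposition above, so the proof reduces to careful bookkeeping rather than any real analytic difficulty. The combination of (iii) as a lower bound on intervals and (iv) as an $L^1$-type upper bound on arbitrary measurable subsets is precisely tailored to this density-point argument.
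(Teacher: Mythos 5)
Your proof is correct: the countable decomposition into the sets $E_{n,M}$, the Lebesgue density point, and the splitting of $\int_a^b F_N\,d\alpha$ using the pointwise bound on $E_{n,M}$ together with hypothesis (iv) on the complement, then (ii) and (iii) in the limit, yields the contradiction as you describe (condition (i) is what guarantees $G_N>0$ for large $N$ so the division is legitimate). The paper itself offers no proof of this lemma --- it is quoted from \cite{HaJo} --- and your density-point argument is essentially the standard proof of such results, i.e.\ the same route as in the cited source.
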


In our application, $F_N(\alpha)$ will be the number of solutions to \eqref{simultan} with $p<N$. Further, for given $0<A<B$ we set
$$
G_N=G_N(A,B)=\frac{A^2}{B^2} \cdot \frac{\min(c_1,...,c_d,d)^{d-1}}{2^{d+1}} N^{1-(d+1)(\gamma_{d,k}-\varepsilon)}(\log N)^{-2}
$$
We will prove

\begin{theorem} \label{Theo}
The following holds for every natural number $N$.\\

(i) Let $0 < A < B$. Then for all $a,b$ with $A\le a<b\le B$ we have
\begin{equation} \label{LOL}
\int\limits_a^b F_N(\alpha) d\alpha \ge (b-a)G_N(A,B)(1+o(1))
\end{equation}
if $N\in \mathcal{S}$ and $N\rightarrow \infty$.\\

(ii) Let $0<A<B$ and $\varepsilon>0$. Then there exists a constant $K=K(A,B,\varepsilon)$ such that, for $\alpha\in [A,B]$, we have 
$$
F_N(\alpha)\le KG_N(A,B)+J_N(\alpha)
$$
with
$$
\int\limits_A^B \left|J_N(\alpha)\right| d\alpha=o\left(G_N(A,B)\right) \quad \mbox{as } N\rightarrow \infty
$$
if $N\in \mathcal{S}$ and $N\rightarrow \infty$. 
\end{theorem}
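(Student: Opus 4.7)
The strategy for both (i) and (ii) is to reduce $F_N(\alpha)$ to a sum over pairs of primes $(p,r)$ with fractional-part constraints, and then to use Fourier analysis together with Vinogradov's method on exponential sums over primes; this generalises the argument of \cite{BG} from $d=1$ to general $d$. First, I would rewrite
\[
F_N(\alpha) = \sum_{\substack{p<N\\ p \text{ prime}}} \mathbf{1}_{r \text{ prime}} \cdot \mathbf{1}_{\{p\alpha\} < p^{-\gamma_{d,k}+\varepsilon}} \cdot \prod_{i=1}^d \mathbf{1}_{\{p c_i \alpha\} < p^{-\gamma_{d,k}+\varepsilon}},
\]
with $r := \lfloor p\alpha \rfloor$, noting that the tuple $(r,q_1,\dots,q_d)$ is uniquely determined by $(p,\alpha)$. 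For part (i), swapping sum and integral and changing variables $\beta = p\alpha - r$ reduces the problem to estimating
\[
\sum_{\substack{p<N\\ p \text{ prime}}} \frac{1}{p} \sum_{\substack{r \text{ prime}\\ ap < r < bp}} \int_{0}^{p^{-\gamma_{d,k}+\varepsilon}} \prod_{i=1}^d \mathbf{1}_{\{c_i r + c_i \beta\} < p^{-\gamma_{d,k}+\varepsilon}} \, d\beta.
\]

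I would then apply a Vaaler-type trigonometric minorant (for (i)) or majorant (for (ii)) to each of the $d+1$ fractional-part indicators, truncating the Fourier expansion at height $\asymp p^{\gamma_{d,k}-\varepsilon}$. The zero Fourier frequency produces the expected main term: integrating in $\beta$ contributes a factor $\min(1,c_1^{-1},\dots,c_d^{-1})\,p^{-(d+1)(\gamma_{d,k}-\varepsilon)}$, summation over primes $r \in (ap,bp)$ via the prime number theorem contributes $(b-a)p/\log p$, and summation over primes $p<N$ contributes another $1/\log N$; altogether these reproduce $G_N(A,B)$ up to the stated constants. For part (ii), in order to obtain a \emph{pointwise} upper bound $F_N(\alpha) \le KG_N + J_N(\alpha)$ rather than merely an integrated one, I would further replace $\mathbf{1}_{r\text{ prime}}$ by a Selberg upper-bound sieve majorant; the main term then provides the pointwise $KG_N$ contribution, and the non-zero Fourier frequencies comprise $J_N(\alpha)$.

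The non-zero frequencies in either case lead to exponential sums of Vinogradov type,
\[
\sum_{\substack{r \text{ prime}\\ ap < r < bp}} e\bigl(r\,(h \cdot \mathbf{c})\bigr), \qquad h \in \Z^d \setminus \{\mathbf{0}\}, \ \|h\|_\infty \ll p^{\gamma_{d,k}-\varepsilon}.
\]
For $N \in \mathcal{S}$, hypothesis \eqref{nustatement} yields $\|h \cdot \mathbf{c}\| \ge N^{-D}$ uniformly over this truncation range, which is precisely what is needed for a Vinogradov--Vaughan-type bound to beat the trivial estimate by a positive power of $N$. Summing the resulting saving against the Fourier-coefficient weights $\prod_i \min(p^{-\gamma_{d,k}+\varepsilon}, 1/|h_i|)$, integrating in $\beta$ and summing over $p$, the cumulative contribution is $o(G_N(A,B))$; this yields the $(1+o(1))$ factor in (i) and the $L^1$-bound on $J_N$ in (ii).

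The main obstacle is the multidimensional calibration of exponents: one must balance the Vaaler truncation length in each of the $d$ coordinates, the Vinogradov power saving (which is sensitive to the Diophantine exponent via \eqref{nustatement}), and the range of summation over primes $p<N$ so that the total error is $o(G_N)$ precisely when $\gamma_{d,k}=1/(d(3k+2))$. A secondary difficulty, specific to part (ii), is that the Selberg sieve interacts with the Vaaler expansion to produce additional convolution-type divisor sums which must be bounded separately; these do not appear in the one-dimensional setting of \cite{BG}, which is why the present generalisation is genuinely more delicate.
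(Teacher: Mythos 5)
There is a genuine gap, and it sits exactly where the theorem's content lies: the Diophantine input and the resulting exponent. You run the whole argument off hypothesis \eqref{nustatement} (the set $\mathcal{S}$ and the bound $\|h\cdot\mathbf{c}\|\ge N^{-D}$), but the theorem's hypothesis is the $k$-Diophantine condition \eqref{condi}, and the specific value $\gamma_{d,k}=1/(d(3k+2))$ in $G_N$ is produced precisely by exploiting \eqref{condi}: one takes a Dirichlet approximation $a/q$ to $\mathbf{j}\cdot\mathbf{c}$ with $q\le X$, uses \eqref{condi} to force $q>(CX)^{1/k}/\|\mathbf{j}\|_\infty$, feeds this into the standard bound for $\sum_m\min(x/m,\|m\,\mathbf{j}\cdot\mathbf{c}\|^{-1})$ (Lemma \ref{standard}), and then optimizes $X$, the Vaughan parameter $u$, and the Vaaler truncation $J$. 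A bare lower bound $\|h\cdot\mathbf{c}\|\ge N^{-D}$ does not by itself give the rational approximations with denominators in the right middle range that Vaughan's identity needs uniformly in $h$, and whatever exponent it yields depends on $D$, not on $k$; declaring the "multidimensional calibration of exponents" to be the remaining obstacle concedes the actual theorem, namely that the errors are $o(G_N)$ with this particular $\gamma_{d,k}$. A related quantitative slip: truncating the Vaaler expansions at height $\asymp p^{\gamma_{d,k}-\varepsilon}$ is not enough for $d\ge2$, since the truncation error contributes $\asymp P/J$ against a main term $\asymp P^{1-d(\gamma_{d,k}-\varepsilon)}$, forcing $J\gg P^{d\gamma_{d,k}}$ (the paper takes $J=[P^{1/(3k+2)}]$).

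Part (ii) has a second, structural problem. Keeping $p$ prime and majorizing only the primality of $r=[p\alpha]$ by a Selberg sieve leaves you, after the Fourier expansion, with exponential sums over primes $p$ whose phases are $p\alpha\theta$ with $\theta=m_0/e+m_1c_1+\dots+m_dc_d$; here $\alpha$ sits inside the phase, so the Diophantine information on $\mathbf{c}$ (in either form) says nothing pointwise about $\alpha\theta$ for an arbitrary $\alpha\in[A,B]$, and the Vinogradov--Vaughan bound you invoke is unavailable. These errors can only be controlled after averaging in $\alpha$, which you do not set up. The paper avoids prime exponential sums here altogether: it applies a two-dimensional upper-bound sieve to the products $n[n\alpha]$ over all integers $n\le N$, so the required congruence counts have errors that are geometric sums $\min(N/t_1,\|\alpha t_1(\cdot)\|^{-1})$, bounded in $L^1(\alpha)$ by Lemma \ref{intlemma} and then by the same $R_d$ machinery driven by \eqref{condi}. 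Your one-variable-sieve variant might be rescued by an $L^2$-in-$\alpha$ (Cauchy--Schwarz) treatment of the prime sums, but as written the pointwise mechanism you describe fails, and in any case the link to the stated $G_N$ again requires the \eqref{condi}-based calibration you have deferred.
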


Theorem \ref{Theo}(i) corresponds to Lemma 2, Theorem \ref{Theo}(ii) to Lemma 3 in \cite{HaJo}. Now it follows that conditions (i) to (iv) in Lemma \ref{Hajolemma} are satisfied 
for
$$
V_N=\int\limits_A^B \left|J_N(\alpha)\right| d\alpha.
$$
Now the claim in Theorem \ref{super} follows from \eqref{endresult}.  

\section{Proof of Theorem \ref{Theo}(\MakeLowercase{i})}

\subsection{Reduction to a counting problem}
To prove Theorem \ref{Theo}(i), we broadly follow the approach in section 3 of \cite{HaJo} . However, we use exponential sum estimates instead of zero density estimates for the Riemann zeta function since they turn out to be more suitable for our purposes. This is the content of the next subsection.\\


Throughout the sequel, we denote by $\mathbb{P}$ the set of primes. Let 
$$
\mathcal{B}_p=\bigcup_{\substack{r\in \mathbb{P}\\ q_1,...,q_d\in \mathbb{N}}} \left[\left.\frac{r}{p},\frac{r+\tilde\eta}{p}\right)\right.\cap \left[\left. \frac{1}{c_1}\cdot \frac{q_1}{p},\frac{1}{c_1}\cdot \frac{q_1+\tilde\eta}{p}\right)\right.\cap\cdots \cap \left[\left. \frac{1}{c_d}\cdot \frac{q_d}{p},\frac{1}{c_d}\cdot \frac{q_d+\tilde\eta}{p}\right)\right.\cap [a,b],
$$
where $\tilde\eta=p^{\varepsilon-\gamma_{d,k}}$. Then
\begin{equation} \label{integral}
\int\limits_{a}^{b} F_N(\alpha)d\alpha =\sum\limits_{\substack{p \in \mathbb{P}\\ p\le N}} \lambda(\mathcal{B}_p),
\end{equation}

\noindent where $\lambda$ is Lebesgue measure. Set
\begin{equation} \label{mudef}
\mu:=(a+b)/(2a).
\end{equation} 

\noindent Our strategy is to split the interval $[1,N]$ into subintervals $[P,P\mu]$ and sum up over the $P$'s in the end. Accordingly, we restrict $p$ to the interval $P\le p<P\mu$ with $P\mu\le N$. We then obtain a lower bound for \eqref{integral} by replacing $\tilde\eta$ with 
\begin{equation} \label{etadef}
\eta=(\mu P)^{\varepsilon-\gamma_{d,k}}.
\end{equation}
Clearly, $\tilde\eta\ge\eta$ if $P\le p<P\mu$. 

\noindent We note that if 
$$
\frac{r}{p}\le  \frac{1}{c_i}\cdot \frac{q_i}{p} \le \frac{r+\eta/2}{p} \mbox{ for } i=1,...,d,
$$
then 
$$
\lambda\left(\left[\left.\frac{r}{p},\frac{r+\eta}{p}\right)\right. \cap \left[\left.\frac{1}{c_1}\cdot \frac{q_1}{p},\frac{1}{c_1}\cdot \frac{q_1+\eta}{p}\right)\right. \cap \cdots \cap 
\left[\left. \frac{1}{c_d}\cdot \frac{q_d}{p},\frac{1}{c_d}\cdot \frac{q_d+\eta}{p}\right)\right.\right)\ge \nu,
$$
where
\begin{equation} \label{nudef}
\nu:=\frac{\eta}{\mu P}\min \left(\frac{1}{2}, \frac{1}{c_1},...,\frac{1}{c_d}\right)=(\mu P)^{-1-\gamma_{d,k}+\varepsilon}\min \left(\frac{1}{2}, \frac{1}{c_1},...,\frac{1}{c_d}\right).
\end{equation}
Also, for all $p\in [P,P \mu)$,
$$
Pa\mu\le r\le bP \Longrightarrow a\le \frac{r}{p}\le b,
$$
and $r$ here runs over the primes in an interval of length $\frac{b-a}{2}P$. We thus have 
\begin{equation} \label{key}
\sum\limits_{P\le p< \mu P} \lambda(\mathcal{B}_p) \ge \nu N(P),
\end{equation}
where $N(P)$ counts the number of solutions $(p,q_1,...,q_d,r)\in \mathbb{P} \times \mathbb{Z}^d\times \mathbb{P}$ to 
$$
q_i\in \left[ \left. c_ir, c_ir+\delta\right.\right)  \mbox{ for } i=1,...,d, \quad P\le p< P \mu, \quad Pa\mu\le r\le bP,  
$$
where
\begin{equation} \label{alphadef}
\delta:=\frac{\min\{c_1,...,c_d\}\eta}{2}=\frac{\min\{c_1,...,c_d\}}{2(\mu P)^{\gamma_{d,k}-\varepsilon}}.
\end{equation}

\noindent Note that in contrast to the problem considered by Harman and Jones, the conditions on $p$ and  $q$ are here independent, which simplifies matters to some extent. By the prime number theorem, the number $R(P)$ of prime solutions to
$$
P\le p < P \mu 
$$
satisfies 
\begin{equation} \label{RP}
R(P) \sim (\mu-1)P(\log 2P)^{-1}  \mbox{ as } P \rightarrow \infty.
\end{equation}
It remains to count the number of solutions $(q_1,...,q_d,r)\in \mathbb{N}\times \mathbb{P}$ to
$$
q_i\in \left[ \left. c_ir, c_ir+\delta\right.\right) \mbox{ for } i=1,...,d, \quad Pa\mu\le r\le bP,
$$ 
which equals
$$
S(P):=\sum\limits_{\substack{Pa\mu \le r \le bP\\ r \ prime}} \prod\limits_{i=1}^d \left([-c_ir]-[-(c_ir+\delta)]\right).
$$
Let
\begin{equation} \label{TPdef}
T(P):=\sum\limits_{Pa\mu \le n \le bP} \prod\limits_{i=1}^d \left([-c_in]-[-(c_in+\delta)]\right)\Lambda(n).
\end{equation}
We aim to show that 
\begin{equation} \label{TP}
T(P) = \delta^d(b-a\mu)P(1+o(1)) + O\left(N^{1-d\gamma_{d,k}+\varepsilon/2}\right)  \quad \mbox{if } P\mu\le N.
\end{equation}
As usual, from \eqref{TP}, it follows that
$$
S(P) = \delta^d(b-a\mu)P (\log 2P)^{-1}(1+o(1)) + O\left(N^{1-d\gamma_{d,k}+\varepsilon/2}\right) \quad \mbox{if } P\mu\le N,
$$
which together with \eqref{RP}  gives
$$
N(P) = R(P)S(P)= \delta^d (b-a\mu)(\mu-1) P^{2} (\log 2P)^{-2}(1+o(1))+ O\left(PN^{1-d\gamma_{d,k}+\varepsilon/2}\right) \quad \mbox{if } P\mu\le N.
$$
Combing this with \eqref{mudef}, \eqref{etadef}, \eqref{nudef}, \eqref{key} and \eqref{alphadef}, we obtain
\begin{equation} \label{near}
\begin{split}
\sum\limits_{P\le p< \mu P} \lambda(\mathcal{B}_p)\ge & \frac{(b-a)^2}{4a}\cdot \frac{\min\left(2,c_1,...,c_d\right)^{d-1}}{2^d} \cdot (\mu P)^{-1-(d+1)(\gamma_{d,k}-\varepsilon)} P^2 (\log 2P)^{-2}(1+o(1))+\\
& O\left(P^{-\gamma_{d,k}}N^{1-d\gamma_{d,k}+3\varepsilon/2}\right) \quad \mbox{if } P\mu\le N.
\end{split}
\end{equation}

By splitting the interval $[1,N)$ into intervals of the form $[P,\mu P)$ and summing up, it now follows from \eqref{integral} and \eqref{near} that
\begin{equation*}
\begin{split}
& \int\limits_{a}^{b} F_N(\alpha)d\alpha \\
\ge & \frac{(b-a)^2}{4a}\cdot \frac{\min(2,c_1,...,c_d)^{d-1}}{2^d} \cdot  \left( \sum\limits_{k=0}^{\infty} 
\left(\frac{N}{\mu^k}\right)^{-1-(d+1)(\gamma_{d,k}-\varepsilon)} \left(\frac{N}{\mu^{k+1}}\right)^2 (\log N)^{-2}\right)(1+o(1))\\
= & \frac{(b-a)^2}{4a}\cdot \frac{\min(2,c_1,...,c_d)^{d-1}}{2^d} \cdot \mu^{-2} \cdot \frac{1}{1-\mu^{-(1-(d+1)(\gamma_{d,k}-\varepsilon))}} \cdot 
N^{1-(d+1)(\gamma_{d,k}-\varepsilon)} (\log N)^{-2}(1+o(1)).
\end{split}
\end{equation*}
Further, since $\mu>1$, we have 
$$
1-\mu^{-(1-(d+1)(\gamma_{d,k}-\varepsilon))}\le \left(1-(d+1)(\gamma_{d,k}-\varepsilon)\right)(\mu-1)= \left(1-(d+1)(\gamma_{d,k}-\varepsilon)\right)\cdot \frac{b-a}{2a}.
$$
Hence, we deduce that 
\begin{equation}
\begin{split}
& \int\limits_{a}^{b} F_N(\alpha)d\alpha\\ \ge & (b-a) \cdot \frac{2a^2}{(a+b)^2} \cdot \frac{1}{1-(d+1)(\gamma_{d,k}-\varepsilon)}\cdot \frac{\min\left(c_1,...,c_d,2\right)^{d-1}}{2^d} \times\\
&  N^{1-(d+1)(\gamma_{d,k}-\varepsilon)}
(\log N)^{-2}(1+o(1))\\ \ge & 
(b-a) \cdot \frac{A^2}{B^2} \cdot  \frac{\min\left(c_1,...,c_d,2\right)^{d-1}}{2^{d+1}}  \cdot N^{1-(d+1)(\gamma_{d,k}-\varepsilon)}
(\log N)^{-2}(1+o(1)), 
\end{split}
\end{equation}
establishing the claim of Theorem \ref{Theo}(i).  It remains to prove \eqref{TP}.

\subsection{Reduction to exponential sums}
For $x\in \mathbb{R}$ let
$$
\psi(x):=x-[x]-\frac{1}{2}.
$$
Then we may write $T(P)$ in the form
\begin{equation}
\begin{split}
T(P) & =\sum\limits_{Pa\mu \le n \le bP}\Lambda(n) \prod\limits_{i=1}^d (\delta-(\psi(-c_in)-\psi(-(c_in+\delta)))) \\
& = \sum\limits_{\mathcal{A} \subseteq \{1,...,d\}} \delta^{d-|\mathcal{A}|} T_{\mathcal{A}}(P),  \end{split}
\end{equation}
where
$$
T_{\mathcal{A}}(P):=\sum\limits_{Pa\mu \le n \le bP}\Lambda(n)\prod\limits_{i\in \mathcal{A}} (\psi(-(c_in+\delta))-\psi(-c_in)).
$$
By the prime number theorem,
$$
T_{\mathcal{\emptyset}}(P)=\delta^d \sum\limits_{Pa\mu \le n \le bP} \Lambda(n) \sim \delta^d(b-a\mu)P \quad \mbox{as } P\rightarrow \infty.
$$
Hence, to establish \eqref{TP}, it suffices to prove that for any fixed $\varepsilon>0$ a bound of the form
\begin{equation} \label{error}
T_{\mathcal{A}}(P)=O\left(N^{1-d\gamma_{d,k}+\varepsilon/2}\right) \quad \mbox{if } P\mu\le N
\end{equation}
for all non-empty subsets $\mathcal{A}$ of $\{1,...,d\}$ holds. We reduce the left-hand side to exponential sums, using the following Fourier analytic tool developed by Vaaler \cite{Vaal}.

\begin{lemma}[Vaaler] \label{Vaaler} For $0<|t|<1$ let
$$
W(t)=\pi t(1-|t|) \cot \pi t +|t|.
$$
Fix a positive integer $J$. For $x\in \mathbb{R}$ define 
$$
\psi^{\ast}(x):=-\sum\limits_{1\le |j|\le J} (2\pi i j)^{-1}W\left(\frac{j}{J+1}\right)e(jx)
$$
and
$$
\tau(x):=\frac{1}{2J+2} \sum\limits_{|j|\le J} \left(1-\frac{|j|}{J+1}\right)e(jx).
$$
Then $\tau(x)$ is non-negative, and we have 
$$
|\psi^{\ast}(x)-\psi(x)|\le \tau(x)
$$
for all real numbers $x$. 
\end{lemma}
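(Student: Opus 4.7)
The plan is to follow Vaaler's construction, which rests on the Beurling--Selberg extremal functions. First I would introduce the Beurling function $B(z)$, the (essentially unique) entire function of exponential type $2\pi$ satisfying $B(x) \ge \mathrm{sgn}(x)$ for all real $x$ and minimizing $\int_{\mathbb{R}}(B(x)-\mathrm{sgn}(x))\,dx$. A key identity is $B(x) + B(-x) = 2K(x)$, where $K(x) = (\sin \pi x/(\pi x))^2$, so $-B(-x)$ is an entire minorant of $\mathrm{sgn}(x)$ and the gap $B(x) - (-B(-x)) = 2K(x)$ is non-negative and of exponential type $2\pi$.

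Next I would build trigonometric polynomial majorants $M(x)$ and minorants $m(x)$ of $\psi(x)$ by periodizing these entire functions at scale $J+1$. The sawtooth $\psi$ can be written as a superposition of $\mathrm{sgn}$-type jumps at integers, so applying $\tfrac{1}{2}B((J+1)(x-n))$ and $-\tfrac{1}{2}B(-(J+1)(x-n))$ and summing over $n \in \mathbb{Z}$, with appropriate normalization, yields trigonometric polynomials of degree $\le J$: since $B$ and $K$ have exponential type $2\pi$, Poisson summation forces all Fourier modes of index $|j| > J$ to vanish. The construction delivers $m(x) \le \psi(x) \le M(x)$ for all real $x$.

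Set $\psi^{\ast}(x) := \tfrac{1}{2}(M(x) + m(x))$ and $\tau(x) := \tfrac{1}{2}(M(x) - m(x))$. Then $|\psi^{\ast}(x) - \psi(x)| \le \tau(x)$ is immediate, and $\tau \ge 0$ follows from $M \ge m$. It remains to match these with the explicit formulas in the statement. For $\tau$, Poisson summation applied to the kernel $\tfrac{1}{J+1}\sum_n K((J+1)(x-n))$ produces exactly the Fej\'er kernel $\frac{1}{2J+2}\sum_{|j|\le J}(1 - |j|/(J+1)) e(jx)$; its non-negativity is of course also visible from the closed form $\frac{1}{2J+2}\bigl(\sin(\pi(J+1)x)/\sin(\pi x)\bigr)^2$. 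For $\psi^{\ast}$, one computes the Fourier transform of $B$ on $\mathbb{R}$, periodizes at scale $J+1$, and simplifies to extract the coefficient $-(2\pi i j)^{-1} W(j/(J+1))$ of $e(jx)$.

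The main obstacle is the explicit Fourier-analytic identification of the weight $W(t) = \pi t(1-|t|)\cot \pi t + |t|$. This unusual combination emerges from two separate sources: the principal-value Fourier transform of $B$ contributes the trigonometric cotangent piece, while the kernel $K$ (equivalently, the symmetrization of $B$) contributes the $|t|$ correction; both must be tracked through the scaling $z \mapsto (J+1)x$ without sign errors. Once this computation is carried out, the stated inequality $|\psi^{\ast} - \psi| \le \tau$ and the non-negativity of $\tau$ follow directly from the extremal property $m \le \psi \le M$ built into the construction.
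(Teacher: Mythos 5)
The paper does not actually prove this lemma: its ``proof'' is a citation to Theorem A6 of Graham--Kolesnik, which in turn rests on Vaaler's paper. So what you are really being asked to reproduce is Vaaler's construction, and your sketch correctly points at the right machinery (Beurling's function $B$, the identity $B(x)+B(-x)=2K(x)$ with $K(x)=(\sin\pi x/(\pi x))^2$, periodization at scale $J+1$ so that exponential type $2\pi$ truncates the spectrum at $|j|\le J$, the Fej\'er kernel, and the weight $W=\widehat{J}$). However, as a proof it has a genuine gap at both of its load-bearing steps. First, the construction you describe does not converge as written: $B(x)\to\pm 1$ as $x\to\pm\infty$, so the sums $\sum_{n\in\mathbb{Z}}\tfrac12 B\bigl((J+1)(x-n)\bigr)$ and $-\sum_{n}\tfrac12 B\bigl(-(J+1)(x-n)\bigr)$ diverge; the sawtooth is not a convergent ``superposition of $\mathrm{sgn}$-jumps'', and one cannot simply periodize $B$ against the integers. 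Vaaler's actual route is through the integrable function $J(z)=\tfrac12 H'(z)$ (with $H=B-K$), an approximate identity whose Fourier transform is exactly $\pi t(1-|t|)\cot\pi t+|t|$ on $[-1,1]$ and vanishes outside; one periodizes that (or equivalently convolves $\psi$ with its periodization) and then proves the Fej\'er-kernel error bound from the extremal properties of $B$. Second, the step you yourself flag as ``the main obstacle'' --- identifying the coefficients $-(2\pi i j)^{-1}W(j/(J+1))$ and verifying $|\psi^\ast-\psi|\le\tau$ --- is precisely the content of the lemma, and it is asserted rather than carried out; saying ``once this computation is carried out'' leaves the theorem unproved. (There are also minor normalization slips: the Fej\'er closed form should be $\tau(x)=\frac{1}{2(J+1)^2}\bigl(\sin(\pi(J+1)x)/\sin(\pi x)\bigr)^2$, not $\frac{1}{2J+2}(\cdot)^2$, though non-negativity is unaffected.) In short, your proposal is a sound roadmap to the cited literature, essentially reconstructing where the paper's reference points, but the convergent construction of the majorant/minorant pair and the explicit Fourier computation of $W$ are missing, so it does not yet constitute a proof.
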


\begin{proof}
This is Theorem A6 in \cite{GrKo} and has its origin in \cite{Vaal}.
\end{proof}

We set
$$
\tau^{\ast}(x):=\psi(x)-\psi^{\ast}(x).
$$
Then
\begin{equation} \label{thestep}
\begin{split}
T_{\mathcal{A}}(P) & =\sum\limits_{\mathcal{B}\subseteq \mathcal{A}} \sum\limits_{Pa\mu \le n \le bP}\Lambda(n) \left(\prod\limits_{i\in \mathcal{B}} (\psi^{\ast}(-(c_in+\delta))-\psi^{\ast}(-c_in))\right)\times\\ & \left(\prod\limits_{j\in \mathcal{A}\setminus \mathcal{B}} (\tau^{\ast}(-(c_jn+\delta))-\tau^{\ast}(-c_jn))\right)\\
& = U_{\mathcal{A}}(P) + O\left(V_{\mathcal{A}}(P)\right),
\end{split}
\end{equation}
where
$$
U_{\mathcal{A}}(P) := \sum\limits_{Pa\mu \le n \le bP}\Lambda(n) \left(\prod\limits_{i\in \mathcal{A}} (\psi^{\ast}(-(c_in+\delta))-\psi^{\ast}(-c_in))\right)
$$
and
$$ 
V_{\mathcal{A}}(P) := (\log 2P)\sum\limits_{i\in \mathcal{A}} \sum\limits_{Pa\mu \le n \le bP}\left(\tau(-(c_in+\delta))+\tau(-c_in))\right),
$$
where the $O$-term $V_{\mathcal{A}}(P)$ arrives by using $|\psi^{\ast}(x)|\le 2$, $|\tau^{\ast}(x)|\le \tau(x)\le 1$, $\Lambda(n)\le \log n$  and the triangle inequality.

The definition of the function $\tau(x)$ gives
$$
V_{\mathcal{A}}(P)=\frac{\log 2P}{2J+2} \sum\limits_{i\in \mathcal{A}} \sum\limits_{|j|\le J} \left(1-\frac{|j|}{J+1}\right) (1+e(j\delta)) \sum\limits_{Pa\mu \le n \le bP} e(jc_in),
$$
and the definition of $\psi^{\ast}(x)$ gives, after multiplying out and re-arranging summations, 
\begin{equation*}
\begin{split}
U_{\mathcal{A}}(P) & =- \sum\limits_{1\le |j_1|\le J} \cdots \sum\limits_{1\le |j_h|\le J} \left(\prod\limits_{k=1}^{h} \left((2\pi i j_k)^{-1}W\left(\frac{j_k}{J+1}\right)
(1+e(j_k\delta)) \right)\right)\times \\ & \sum\limits_{Pa\mu \le n \le bP}\Lambda(n) e\left(n \sum\limits_{k\in \mathcal{A}} j_kc_{l_k}\right),
\end{split}
\end{equation*}
where we suppose that $J$ is a positive integer satisfying $J\le P$ and 
$$
h:=|\mathcal{A}| \quad \mbox{and} \quad \mathcal{A}=\left\{l_1,...,l_h\right\}.
$$

We further estimate $V_{\mathcal{A}}(P)$ by
\begin{equation} \label{V}
\begin{split}
V_{\mathcal{A}}(P) \ll & \frac{P\log 2P}{J} +
\frac{\log 2P}{J} \cdot \sum\limits_{i=1}^d \sum\limits_{1\le j\le J} \left| \sum\limits_{Pa\mu \le n \le bP} e(jc_in) \right|\\
\ll & \frac{P\log 2P}{J} +
\frac{\log 2P}{J} \cdot \sum\limits_{i=1}^d \sum\limits_{1\le j\le J} \min\left(P,\left|\left|jc_i\right|\right|^{-1}\right) \\
\ll & \frac{P\log 2P}{J} +
(\log 2P)\sum\limits_{i=1}^d \sum\limits_{1\le j\le J} \min\left(\frac{P}{j},\left|\left|jc_i\right|\right|^{-1}\right) \\ 
=: & \frac{P\log 2P}{J} + (\log 2P){\tilde V}_d(P),
\end{split}
\end{equation}
where the term $P(\log 2P)/J$ bounds the contribution of $j=0$, 
and we estimate $U_{\mathcal{A}}(P)$ by
\begin{equation} \label{U}
U_{\mathcal{A}}(P) \ll {\tilde U}_{\mathcal{A}}(P) := 
\sum\limits_{1 \le |j_1|\le J} \cdots \sum\limits_{1\le |j_h| \le J} \frac{1}{\left|j_1\cdots j_h\right|} \left|
\sum\limits_{Pa\mu \le n \le bP}\Lambda(n) e\left(n \sum\limits_{k\in \mathcal{A}} j_kc_{l_k}\right) \right|.
\end{equation}
It remains to estimate ${\tilde U}_{\mathcal{A}}(P)$ and ${\tilde V}_{d}(P)$. The estimation of ${\tilde V}_d(P)$ is clearly easier than that of ${\tilde U}_{\mathcal{A}}(P)$. We first deal with the term ${\tilde U}_{\mathcal{A}}(P)$.

\subsection{Application of Vaughan's identity}
We convert the inner sum involving the von Mangoldt function on the right-hand side of \eqref{U}  into bilinear sums using Vaughan's identity.

\begin{lemma}[Vaughan] \label{Vaughan}
Let $u\ge 1$, $v\ge 1$, $uv\le x$. Then we have for every arithmetic function $f: \mathbb{N}\rightarrow \mathbb{C}$ the estimate
$$
\sum\limits_{u<n\le x} f(n)\Lambda(n) \ll (\log 2x)T_1+T_2
$$
with
$$
T_1:= \sum\limits_{l\le uv} \max\limits_{w} \left| \sum\limits_{w\le m\le x/l} f(ml) \right|
$$
and
$$
T_2:=\left| \sum\limits_{u<m\le x/v} \sum\limits_{v<l\le x/m} \Lambda(m)b(l) f(ml) \right|,
$$
where $b(l)$ is an arithmetic function which only depends on $v$ and satisfies the inequality $b(l)\le \tau(l)$, $\tau(l)$ being the number of divisors of $l$.
\end{lemma}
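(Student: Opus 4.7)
The plan is to derive Vaughan's classical arithmetic decomposition of $\Lambda(n)$ valid for $n>u$ and then substitute it into the sum $\sum_{u<n\le x}f(n)\Lambda(n)$. Starting from the formal Dirichlet-series identity
\[
-\frac{\zeta'}{\zeta}=F-\zeta' G-\zeta FG+\Big(-\frac{\zeta'}{\zeta}-F\Big)(1-\zeta G),
\]
with truncated generating functions $F(s):=\sum_{m\le u}\Lambda(m) m^{-s}$ and $G(s):=\sum_{m\le v}\mu(m) m^{-s}$ (verified by expanding the right-hand side), I read off the coefficient of $n^{-s}$. Since the $F$ contribution vanishes for $n>u$, this gives the pointwise identity
\[
\Lambda(n)=\sum_{\substack{ab=n\\ a\le v}}\mu(a)\log b\;-\sum_{\substack{abc=n\\ a\le u,\, b\le v}}\Lambda(a)\mu(b)\;+\sum_{\substack{mn'=n\\ m>u,\, n'>v}}\Lambda(m)\,\beta(n'),
\]
where $\beta(n'):=\sum_{d\mid n',\,d>v}\mu(d)$ satisfies $|\beta(n')|\le\tau(n')$ (using $\sum_{d\mid n'}\mu(d)=0$ for $n'>1$, so that $\beta$ just discards the small divisors).

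Substituting this into $\sum_{u<n\le x}f(n)\Lambda(n)$ and swapping summations yields three pieces. The third is precisely the double sum defining $T_2$ after relabeling $(m,n')\mapsto(m,l)$: the stated ranges $u<m\le x/v$ and $v<l\le x/m$ are forced by $m>u$, $n'>v$, and $mn'\le x$, and the weight $b(l):=\beta(l)$ already satisfies $|b(l)|\le\tau(l)$. The first Type I sum, with outer variable $l:=a\le v$, becomes $\sum_{l\le v}\mu(l)\sum_{w<m\le x/l}f(lm)\log m$ for some $w$ depending on $u/l$; partial summation on $\log m$ bounds it by $(\log 2x)\sum_{l\le v}\max_w|\sum_{w\le m\le x/l}f(lm)|$, which fits inside $(\log 2x)T_1$. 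The second Type I sum, regrouped by the small product $l:=ab\le uv$, has the same shape of inner sum multiplied by a coefficient of the form $\sum_{ab=l,\,a\le u,\,b\le v}\Lambda(a)\mu(b)$.

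The main delicate step is controlling this last coefficient so that one recovers the plain $T_1$ (rather than a $\tau$-weighted version). The naive triangle-inequality estimate only yields $\tau(l)\log l$, which is not of the claimed shape. Vaughan's specific algebraic arrangement sidesteps this by keeping $a$ and $b$ as \emph{separate} outer variables throughout: writing the Type I contribution from $-\zeta FG$ as $\sum_{a\le u}\Lambda(a)\sum_{b\le v}\mu(b)\sum_{m}f(abm)$ and absorbing $\Lambda(a)\le\log(2x)$ and $|\mu(b)|\le 1$ into the overall logarithmic factor, one then combines $ab$ into a single outer $l\le uv$ at the last moment, so that each pair $(a,b)$ contributes only once to the max over the inner sum. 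With this bookkeeping in place, the remaining ingredients --- the Dirichlet-series identity, partial summation, the triangle inequality, and the trivial bounds $|\mu|\le 1$, $\Lambda(m)\le\log m$, $|\beta|\le\tau$ --- are routine.
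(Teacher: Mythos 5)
The paper itself gives no proof of this lemma (it simply cites Satz 6.1.2 of Br\"udern, which goes back to Vaughan), so your job was to reconstruct the standard argument, and your global structure is the right one: the identity $-\zeta'/\zeta=F-\zeta'G-\zeta FG+\bigl(-\zeta'/\zeta-F\bigr)(1-\zeta G)$, extraction of Dirichlet coefficients for $n>u$, the Type II term giving $T_2$ with $b(l)=\sum_{d\mid l,\,d>v}\mu(d)$ and $|b(l)|\le\tau(l)$, and partial summation on $\log$ for the $-\zeta'G$ contribution are all correct and are exactly how the cited source proceeds.

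However, the step you single out as ``the main delicate step'' is handled incorrectly, in two ways. First, the difficulty you describe is not there: after grouping the $-\zeta FG$ contribution by $l=ab\le uv$, the coefficient is $c_l=\sum_{ab=l,\,a\le u,\,b\le v}\Lambda(a)\mu(b)$, and the plain triangle inequality gives $|c_l|\le\sum_{a\mid l}\Lambda(a)=\log l\le\log 2x$ by the Chebyshev identity --- not $\tau(l)\log l$; the sum over $ab=l$ runs over the divisors $a$ of $l$ with weights $\Lambda(a)$ whose \emph{total} is exactly $\log l$, so there is no $\tau(l)$-fold multiplicity to fight. Second, the workaround you propose would actually fail: if you first bound $\Lambda(a)\le\log 2x$ and $|\mu(b)|\le 1$ and take absolute values with $a$ and $b$ kept as separate outer variables, then merging $ab$ into a single $l\le uv$ ``at the last moment'' makes every pair $(a,b)$ with $ab=l$ contribute its own copy of $\max_w\bigl|\sum_{w\le m\le x/l}f(ml)\bigr|$, so you end up with $\sum_{l\le uv}\#\{(a,b):ab=l,\ a\le u,\ b\le v\}\max_w|\cdots|$, i.e.\ precisely the divisor-weighted version of $T_1$ that the lemma's clean statement avoids. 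The repair is the opposite of your bookkeeping: group into $l=ab$ \emph{before} taking absolute values and bound the grouped coefficient by $\log l$. With that single correction, the rest of your argument (the identification of $T_2$, the bound on $b(l)$, and the partial-summation treatment of the $\mu(a)\log b$ term, both Type I pieces landing in $(\log 2x)T_1$ since $v\le uv$) goes through and yields the lemma as stated.
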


\begin{proof}
This is Satz 6.1.2. in \cite{Brud} and has its origin in \cite{Vaug}.
\end{proof}

We use Lemma \ref{Vaughan} with parameters $u$ and $v$ satisfying $1\le u=v\le (Pa\mu)^{1/2}$, to be fixed later, $x:=bP$ and 
$$
f(n):=\begin{cases} e\left(nc\right) & \mbox{ if } Pa\mu \le n\le bP, \\ 0 & \mbox{ if } n<Pa\mu \end{cases}
$$
with
$$
c:=\sum\limits_{k\in \mathcal{A}} j_kc_{l_k}
$$
to deduce that
\begin{equation} \label{ZHest}
{\tilde U}_{\mathcal{A}}(P)\ll (\log 2P)Z_1+Z_2,
\end{equation}
where
$$
Z_1:=\sum\limits_{1\le |j_1|\le J}\cdots \sum\limits_{1\le |j_h|\le J}  \frac{1}{\left|j_1\cdots j_h\right|} \sum\limits_{l\le u^2} \max\limits_{Pa\mu/l \le w\le bP/l} \left| \sum\limits_{w\le m\le bP/l} 
e\left(ml \sum\limits_{k\in \mathcal{A}} j_kc_{l_k}\right) \right|
$$
and 
$$
Z_2:=\sum\limits_{1\le |j_1|\le J}\cdots \sum\limits_{1\le |j_h|\le J} \frac{1}{\left|j_1\cdots j_h\right|} \left| \sum\limits_{u<m\le bP/u} \ \sum\limits_{\max(u,Pa\mu/m)\le l\le bP/m} \Lambda(m)b(l)  
e\left(ml \sum\limits_{k\in \mathcal{A}} j_kc_{l_k}\right) \right|.
$$

Obviously,
\begin{equation} \label{Z1H}
Z_1\ll  \sum\limits_{1\le |j_1|\le J}\cdots \sum\limits_{1\le |j_h|\le J} \frac{1}{\left|j_1\cdots j_h\right|}\sum\limits_{l\le u^2} \min\left( \frac{P}{l}, \left|\left| l \sum\limits_{k\in \mathcal{A}} j_kc_{l_k} \right|\right|^{-1}\right).
\end{equation}
We shall boil down $Z_2$ to similar terms. Rearranging the summation gives 
$$
Z_2=\sum\limits_{1\le |j_1|\le J}\cdots \sum\limits_{1\le |j_h|\le J} \frac{1}{\left|j_1\cdots j_h\right|} \left| \sum\limits_{a\mu u/b\le l\le bP/u} b(l) \sum\limits_{M_1(l) \le m\le M_2(l)}  \Lambda(m)  e\left(ml \sum\limits_{k\in \mathcal{A}} j_kc_{l_k}\right)\right|
$$
with 
$$
M_1(l):=\max([u]+1,Pa\mu/l) \quad \mbox{and} \quad M_2(l):=bP/l.
$$
We now observe that
\begin{equation} \label{Z2HK}
Z_2\ll (\log 2P) \max\limits_{u\le L\le bP/u} Z_2(L),
\end{equation}
where
\begin{equation*}
Z_2(L) :=  \sum\limits_{1\le |j_1|\le J}\cdots \sum\limits_{1\le |j_h|\le J} \frac{1}{\left|j_1\cdots j_h\right|} \sum\limits_{L\le l\le 2L} b(l) \cdot 
\left| \sum\limits_{M_1(l)\le m\le M_2(l)} \Lambda(m) e\left(ml \sum\limits_{k\in \mathcal{A}} j_kc_{l_k}\right) \right|. 
\end{equation*}
Recall that $1\le J\le P$. Using the Cauchy-Schwarz inequality and the bound
$$
\sum\limits_{L\le l\le 2L} |b(l)|^2 \le \sum\limits_{L\le l\le 2L} d(l)^2 \ll L(\log 2L)^3, 
$$ 
$d(l)$ being the divisor function, 
and expanding the square, we obtain
\begin{equation*} 
\begin{split}
Z_2(L)^2 \ll &  \left(\sum\limits_{L\le l\le 2L} |b(l)|^2\right)   \left(\sum\limits_{1\le |j_1|\le J}\cdots \sum\limits_{1\le |j_h|\le J} \frac{1}{\left|j_1\cdots j_h\right|}\right) \times\\ & \sum\limits_{1\le |j_1|\le J}\cdots \sum\limits_{1\le |j_h|\le J} \frac{1}{\left|j_1\cdots j_h\right|} \cdot
 \sum\limits_{L\le l\le 2L} \left| \sum\limits_{M_1(l)\le m\le M_2(l)} \Lambda(m)  e\left(ml \sum\limits_{k\in \mathcal{A}} j_kc_{l_k}\right) \right|^2\\
\ll & \left(\sum\limits_{L\le l\le 2L} |b(l)|^2\right)  (\log 2P)^{h} \sum\limits_{1\le |j_1|\le J}\cdots \sum\limits_{1\le |j_h|\le J} \frac{1}{\left|j_1\cdots j_h\right|} \times \\ &
 \sum\limits_{L\le l\le 2L} \left| \sum\limits_{M_1(l)\le m\le M_2(l)} \Lambda(m)  e\left(ml \sum\limits_{k\in \mathcal{A}} j_kc_{l_k}\right) \right|^2.
\end{split}
\end{equation*}
Expanding the square, and splitting the resulting expression into a diagonal and non-diagonal term, we estimate the above further by
\begin{equation*} 
\begin{split}
Z_2(L)^2 \ll & L(\log 2P)^{h+3}   \sum\limits_{1\le |j_1|\le J}\cdots \sum\limits_{1\le |j_h|\le J} \frac{1}{\left|j_1\cdots j_h\right|} \sum\limits_{L\le l\le 2L}  \sum\limits_{M_1(l)\le m\le M_2(l)} \Lambda(m)^2 + \\ &
L(\log 2P)^{h+3} \sum\limits_{1\le |j_1|\le J}\cdots \sum\limits_{1\le |j_h|\le J} \frac{1}{\left|j_1\cdots j_h\right|} \times\\ & 
\left| \sum\limits_{L\le l\le 2L}  \sum\limits_{M_1(l)\le m_1<m_2\le M_2(l)} \Lambda(m_1)\Lambda(m_2) e\left((m_2-m_1)l \sum\limits_{k\in \mathcal{A}} j_kc_{l_k}\right)\right|.
\end{split}
\end{equation*}
Exchanging summation and estimating geometric sums, we deduce that
\begin{equation} \label{aftercauchy}
\begin{split}
Z_2(L)^2 \ll & LP(\log 2P)^{2h+5} + L(\log 2P)^{h+3}\sum\limits_{1\le |j_1|\le J}\cdots \sum\limits_{1\le |j_h|\le J} \frac{1}{\left|j_1\cdots j_h\right|} \times \\ & \sum\limits_{u\le m_1<m_2\le bP/L} \left| \sum\limits_{\max(L,Pa\mu/m_1)\le l\le \min(2L,bP/m_2)} e\left((m_2-m_1)l \sum\limits_{k\in \mathcal{A}} j_kc_{l_k}\right)\right|\\
\ll & LP(\log 2P)^{2h+5}+  L(\log 2P)^{h+3} \sum\limits_{1\le |j_1|\le J}\cdots \sum\limits_{1\le |j_h|\le J} \frac{1}{\left|j_1\cdots j_h\right|}\times\\ & \sum\limits_{u\le m_1<m_2\le bP/L} \min\left(L, \left|\left| (m_2-m_1)\sum\limits_{k\in \mathcal{A}} j_kc_{l_k}  \right|\right|^{-1}\right)\\
\ll & LP(\log 2P)^{2h+5}+ P(\log 2P)^{h+3}  \sum\limits_{1\le |j_1|\le J}\cdots \sum\limits_{1\le |j_h|\le J} \frac{1}{\left|j_1\cdots j_h\right|} \times\\ &  \sum\limits_{1\le m \le bP/L} 
\min\left(\frac{P}{m},\left|\left| m\sum\limits_{k\in \mathcal{A}} j_kc_{l_k}  \right|\right|^{-1}\right). 
\end{split}
\end{equation}

\subsection{Construction of a Dirichlet approximation}
The treatments of $Z_1$ and $Z_2$ lead to sums of the form 
\begin{equation} \label{R}
R_{\mathcal{A}}(M,x):= \sum\limits_{1\le |j_1|\le J}\cdots \sum\limits_{1\le |j_h|\le J} \frac{1}{\left|j_1\cdots j_h\right|} \sum\limits_{1\le m \le M} \min\left(\frac{x}{m},
\left|\left| m\sum\limits_{k\in \mathcal{A}} j_kc_{l_k}  \right|\right|^{-1}\right)
\end{equation}
which we estimate in the following. 
Here the Diophantine properties of the vector $(c_1,...,c_d)$ come into play. 

Splitting each of the $j_k$-intervals into $O(\log P)$ dyadic intervals and summing up their contributions, we obtain
\begin{equation} \label{Rsplit}
R_{\mathcal{A}}(M,x)\ll (\log 2P)^d \sup\limits_{1\le H_1,...,H_d\le J } \frac{R_d(H_1,...,H_d,M,x)}{H_1\cdots H_d}, 
\end{equation}
where 
\begin{equation}\label{RH}
R_d(H_1,...,H_d,M,x):=  \sum\limits_{\substack{|j_1|\le H_1,...,|j_d|\le H_d\\ {\bf j}\not={\bf 0}}} \sum\limits_{1\le m \le M} \min\left(\frac{x}{m},
\left|\left| m {\bf j}\cdot {\bf c}  \right|\right|^{-1}\right)
\end{equation}
with
$$
{\bf j}:=(j_1,...,j_d) \quad \mbox{and} \quad {\bf c}:=(c_1,...,c_d).
$$

The key point is now to approximate ${\bf j} \cdot {\bf c}$ by rational numbers, using Dirichlet's approximation theorem, where the denominators are uniformly bounded from below. Let ${\bf j}\in \mathbb{Z}^d\setminus{{\bf 0}}$ and $X$ be a positive integer, to be fixed later. By Dirichlet's approximation theorem, there exist integers $a,q$ such that gcd$(a,q)=1$, $1\le q\le X$ and 
$$
\left|{\bf j}\cdot {\bf c} - \frac{a}{q}\right| \le \frac{1}{qX}. 
$$
Hence,
$$
\left|\left| q{\bf j}\cdot {\bf c} \right|\right| \le \frac{1}{X}.
$$
On the other hand, by condition \eqref{condi} in Theorem \ref{super}, we have
$$
\frac{C}{q^k||{\bf j}||_{\infty}^k} < \left|\left| q{\bf j}\cdot {\bf c} \right|\right|.
$$
It follows that 
$$
q>\frac{(CX)^{1/k}}{||{\bf j}||_{\infty}}.
$$

Now we apply the following well-known lemma. 

\begin{lemma} \label{standard} Let $L\ge 1$ and $x>1$. Suppose that $|c-a/q|\le q^{-2}$ with $a\in \mathbb{Z}$, $q\in \mathbb{N}$ and $(a,q)=1$. Then
$$
\sum\limits_{1\le l \le L} \min\left(\frac{x}{l},\left|\left| lc  \right|\right|^{-1}\right)\ll \left(\frac{x}{q}+L+q\right)(\log 2Lqx).
$$
\end{lemma}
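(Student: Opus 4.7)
The plan is to partition $\{1,\ldots,\lfloor L\rfloor\}$ into blocks of length $q$ and exploit the Dirichlet approximation $|c - a/q| \le q^{-2}$ together with the equidistribution of $\{ra/q \bmod 1\}$ coming from $\gcd(a,q)=1$. Writing $c = a/q + \theta$ with $|\theta| \le q^{-2}$ and $l = sq+r$ with $0 \le s$ and $1 \le r \le q$, we have
$$\|lc\| = \|ra/q + l\theta\| \quad \text{with} \quad |l\theta| \le L/q^2,$$
and there are at most $L/q + 1$ such blocks.

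Within a single block the index $s$ is fixed, so as $r$ varies the perturbation $l\theta$ changes by at most $q|\theta| \le 1/q$. Since $\gcd(a,q)=1$, the base points $\{ra/q \bmod 1 : 1 \le r \le q\}$ form a complete residue system $\{0, 1/q, \ldots, (q-1)/q\}$. Combining the exact $1/q$-spacing of the base points with the $\le 1/q$ in-block spread of the perturbation, one concludes that for each $0 \le k \le q/2$ the number of $l$ in a given block with $\|lc\| \in [k/q,(k+1)/q)$ is $O(1)$. This equidistribution input is the heart of the argument.

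Given the bin-occupancy control, I bound the per-block sum by splitting into the small bin ($k=0$, where $\|lc\| < 1/q$) and the remaining bins ($1 \le k \le q/2$). For the small bin I use the $x/l$ branch of the minimum: in a block of index $s \ge 1$ the indices are $\asymp sq$, contributing $O(x/(sq))$, which sums to $O((x/q)\log(2L/q))$ over $1 \le s \le L/q$; the edge block $s=0$ contributes $O(x/q + q)$ by direct inspection of the index $l=q$ (where $\|qc\| = \|q\theta\| \le 1/q$, giving $\min(x/q, \|qc\|^{-1}) \le x/q$) and the remaining $O(1)$ small-bin indices (where $\|lc\|^{-1} = O(q)$). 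For the bins $k \ge 1$, each contains $O(1)$ elements contributing at most $q/k$, giving a per-block total $O(q\log q)$ and an overall total $O((L+q)\log q)$ across the $O(L/q + 1)$ blocks. Collecting everything yields the stated bound $O((x/q + L + q)\log(2Lqx))$.

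The main obstacle is the bookkeeping for the small-bin case, especially in the edge block $s=0$ where the $x/l$ bound degenerates when $l$ is tiny; one must verify that any $l \le q$ with $\|lc\|$ very small admits the alternative bound $\|lc\|^{-1} = O(q)$, using that $\|la/q\|$ is a multiple of $1/q$ and $|l\theta| \le L/q^2$ is small compared to $1/q$, so that the contribution of such indices is absorbed into the $L+q$ term. The rest is routine summation, with the logarithmic factors $\log(2Lqx)$ arising from collecting the harmonic tails $\sum_s 1/s$ and $\sum_k 1/k$.
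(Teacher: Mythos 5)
The paper itself does not prove this lemma; it simply cites Lemma 6.4.4 of Br\"udern's book, and your block decomposition modulo $q$ is exactly the standard textbook argument behind that reference: blocks of length $q$, the $O(1)$ bin-occupancy coming from the exact $1/q$-spacing of $ra/q$ combined with the $\le 1/q$ in-block spread of $l\theta$, then harmonic summation over bins and blocks. That skeleton is correct and does yield the stated bound.

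However, your handling of the edge block $s=0$ contains a genuine error. You claim that every small-bin index $l\le q$ other than $l=q$ satisfies $\|lc\|^{-1}=O(q)$, justified by ``$|l\theta|\le L/q^{2}$ is small compared to $1/q$''. Neither part is right: $L$ may greatly exceed $q$, and even after restricting to $l\le q$ one only gets $|l\theta|\le 1/q$, which is comparable to the spacing, not negligible against it. Concretely, take $a=q-1$ (so $\gcd(a,q)=1$), $c=a/q-1/q^{2}$ and $l=q-1$: then $la\equiv 1\pmod q$ and $\|lc\|=1/q^{2}$, so $\|lc\|^{-1}=q^{2}$, which cannot be absorbed into the $L+q$ term as you propose. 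The repair is standard and short: the small-bin indices $l<q$ in the edge block are those with $la\equiv\pm 1\pmod q$ (at most two of them); if $l\le q/2$ then $\|lc\|\ge 1/q-l/q^{2}\ge 1/(2q)$ and your $O(q)$ bound is valid, while if $l>q/2$ you must switch to the other branch, $x/l<2x/q$, and absorb the contribution into the $x/q$ term rather than into $L+q$. With that one adjustment the argument is complete.
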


\begin{proof} This is Lemma 6.4.4. in \cite{Brud}.
\end{proof} 

It follows that 
\begin{equation*} 
\sum\limits_{1\le m \le M} \min\left(\frac{x}{m},\left|\left| m {\bf j}\cdot {\bf c}  \right|\right|^{-1}\right) \ll \left(\frac{x||{\bf j}||_{\infty}}{X^{1/k}}+M+X\right)(\log 2MXx)
\end{equation*}
and hence
\begin{equation} \label{Rsplitesti}
R_d(H_1,...,H_d,M,x) \ll H_1\cdots H_d  \left(\frac{x\max(H_1,...,H_d)}{X^{1/k}}+M+X\right)(\log 2MXx).
\end{equation}
Combining \eqref{Rsplit} and \eqref{Rsplitesti}, we obtain
\begin{equation*} 
R_{\mathcal{A}}(M,x)\ll (\log 2P)^{d+1} \left(\frac{xJ}{X^{1/k}}+M+X\right).
\end{equation*}
Now choosing
$$
X:=\left[(xJ)^{1-1/(k+1)}\right],
$$
we deduce that
\begin{equation} \label{RAest}
R_{\mathcal{A}}(M,x)\ll (\log 2P)^{d+1} \left(M+(xJ)^{1-1/(k+1)}\right).
\end{equation}

\subsection{Completion of the proof} 
Using \eqref{Z1H}, \eqref{aftercauchy}, \eqref{Rsplit} and \eqref{RAest}, we get
\begin{equation} \label{Z1esti}
Z_1\ll (\log 2P)^{d+1}\left(u^2+(PJ)^{1-1/(k+1)}\right)
\end{equation}
and 
\begin{equation*}
Z_2(L)\ll (\log 2P)^{3d/2+5/2}\left(P^{1/2}L^{1/2} + PL^{-1/2}+P^{1-1/(2(k+1))}J^{1/2-1/(2(k+1))}\right)
\end{equation*}
and hence by (\ref{Z2HK}),
\begin{equation}\label{Z2esti}
Z_2\ll (\log 2P)^{3d/2+7/2} \left(Pu^{-1/2}+P^{1-1/(2(k+1)}J^{1/2-1/(2(k+1))}\right).
\end{equation}
Combining \eqref{U}, \eqref{ZHest}, \eqref{Z1esti} and \eqref{Z2esti}, we get
\begin{equation*}
\tilde{U}_{\mathcal{A}}(P)\ll P^{\varepsilon}\left(u^2+Pu^{-1/2}+(PJ)^{1-1/(k+1)}+P^{1-1/(2(k+1))}J^{1/2-1/(2(k+1))}\right).
\end{equation*}  
Now choosing $u:=P^{2/5}$, it follows that
\begin{equation} \label{UAesti}
\tilde{U}_{\mathcal{A}}(P)\ll P^{\varepsilon}\left(P^{4/5}+(PJ)^{1-1/(k+1)}+P^{1-1/(2(k+1))}J^{1/2-1/(2(k+1))}\right).
\end{equation}

We now turn to the term $\tilde{V}_{d}(P)$, defined in \eqref{V}. By \eqref{RH} and \eqref{Rsplitesti}, we have
$$
\tilde{V}_{d}(P)\ll R_d(1,...,1,J,P) \ll \left(\frac{P}{X^{1/k}}+J+X\right)(\log 2JXP)
$$
for any positive integer $X$.
Choosing 
$$
X:=\left[P^{1-1/(k+1)}\right]
$$
gives
$$
\tilde{V}_{d}(P)\ll \log(2P)\left(J+P^{1-1/(k+1)}\right).
$$
Combining this with \eqref{V}, we find
\begin{equation} \label{VAesti}
V_{\mathcal{A}}(P)\ll (\log 2P) \left(PJ^{-1}+J+P^{1-1/(k+1)}\right).
\end{equation}

Now putting \eqref{thestep}, \eqref{UAesti} and \eqref{VAesti} together, we arrive at
$$
T_{\mathcal{A}}(P)\ll P^{\varepsilon}\left(PJ^{-1}+J+P^{4/5}+(PJ)^{1-1/(k+1)}+P^{1-1/(2(k+1))}J^{1/2-1/(2(k+1))}\right).
$$
Now choosing 
\begin{equation} \label{Jchoice}
J:=\left[P^{1/(3k+2)}\right],
\end{equation}
we get
$$
T_{\mathcal{A}}(P)\ll P^{1-1/(3k+2)+\varepsilon},
$$
which proves \eqref{error} upon replacing $\varepsilon$ by $\varepsilon/2$. This completes the proof of Theorem \ref{Theo}(i). 

\section{Proof of Theorem \ref{Theo}(ii)} 

\subsection{Sieve theoretical approach}
We are broadly following the treatment in \cite{HaJo} with appropriate modifications because the linear case, considered here, requires a different treatment. In particular, as in the previous section, the Diophantine properties of the vector $(c_1,...,c_d)$ will come into play. Let $\{\cdot\}$ represent the fractional part, and put
$$
\mu:=N^{\varepsilon-\gamma_{d,k}}.
$$
Write
$$
\mathcal{A}=\mathcal{A}(\alpha)=\{n[n\alpha]\ :\  1\le n\le N, \ \{n\alpha\}<\mu, \ \{nc_i\alpha\}<\mu \mbox{ for } i=1,...,d\}.
$$
We desire to show that $\mathcal{A}$ does not contain too many products of two primes. To this end, we apply a two-dimensional upper bound sieve (see \cite{HaRi}, Theorem 5.2). We therefore need to obtain an asymptotic formula for the number of solutions to 
$$
n[n\alpha]\equiv 0 \bmod{q}, \ 1\le n\le N,
$$
with 
\begin{equation} \label{max}
\{n\alpha\}<\mu \mbox{ and } \{nc_i\alpha\}<\mu \mbox{ for } i=1,...,d,
\end{equation}
where 
$$
q\le Q:=N^{\varepsilon}.
$$
For this it suffices to establish a formula for the number of solutions to
$$
n \equiv 0 \bmod{t_1}, \quad [n\alpha]\equiv 0 \bmod{t_2}
$$
subject to \eqref{max}. We can combine \eqref{max} with the congruence conditions to require
\begin{equation} \label{threeconds}
1\le n\le \frac{N}{t_1}, \quad \left\{\frac{nt_1\alpha}{t_2}\right\}<\frac{\mu}{t_2}, \quad \left\{nt_1c_i\alpha\right\}<\mu \mbox{ for } i=1,...,d,
\end{equation}
and count the number $S(\alpha;t_1,t_2)$ of solutions to \eqref{threeconds} using Fourier analysis. To this end, we write 
$$
S(\alpha,t_1,t_2)=\sum\limits_{1\le n\le N/t_1} \left(\left[\frac{nt_1\alpha}{t_2}\right]-\left[\frac{nt_1\alpha}{t_2}-\frac{\mu}{t_2}\right]\right)\cdot \prod\limits_{i=1}^d \left(
[nt_1c_i\alpha]-[nt_1c_i\alpha-\mu]\right)
$$
and evaluate this term in a similar way as the term $T(P)$ defined in \eqref{TPdef} using Vaaler's Lemma 3.1. By a chain of similar calculations, we arrive
at the asymptotic estimate
\begin{equation} \label{arriv} 
S(\alpha;t_1,t_2)=\frac{N\mu^{d+1}}{t_1t_2}+O\left(\frac{N\mu^{d}}{L}+E(\alpha;t_1,t_2)\right),
\end{equation}
where we set 
$$
L:=Q^3\mu^{-1}
$$
and 
$$
E(\alpha;t_1,t_2):=\frac{\mu^{d+1}}{t_2}\sum\limits_{\substack{|m_0|\le L,...,|m_d|\le L\\ (m_0,...,m_d)\not=(0,...,0)}} \left| \sum\limits_{1\le n\le N/t_1}
e\left(n\alpha t_1\left(\frac{m_0}{t_2}+\sum\limits_{i=1}^d c_im_i\right)\right)\right|.
$$
The above estimate \eqref{arriv} is analog to the equation after (26) in \cite{HaJo}.

Now, applying the upper bound sieve gives
\begin{equation}
F_N(\alpha)\le \frac{C(\varepsilon)N\mu^{d}}{\log^2 N} + O\left(J_N(\alpha)\right),
\end{equation}
where 
\begin{equation*}
\begin{split}
J_N(\alpha):= & \sum\limits_{t_1t_2\le Q} (t_1t_2)^{\varepsilon}\left(\frac{N\mu^{d}}{L}+E(\alpha;t_1,t_2)\right)\\ = & \sum\limits_{t_1t_2\le Q} (t_1t_2)^{\varepsilon}E(\alpha;t_1,t_2)+o\left(\frac{N\mu^{d+1}}{\log^2 N}\right) \quad \mbox{as } N\rightarrow \infty.
\end{split}
\end{equation*}
Hence, to establish the claim in Theorem \ref{Theo}(i), it suffices to show that
\begin{equation} \label{average}
\sum\limits_{t_1t_2\le Q} (t_1t_2)^{\varepsilon}\int\limits_A^B E(\alpha;t_1,t_2)d\alpha = o\left(\frac{N\mu^{d+1}}{\log^2 N}\right) \quad \mbox{as } N\rightarrow \infty, \ N\in \mathcal{S}.
\end{equation}

\subsection{Average estimation for $E(\alpha;	t_1,t_2)$} 
To estimate the expression on the right-hand side of \eqref{average}, we first observe that
\begin{equation} \label{E2alpha}
E(\alpha;t_1,t_2)\ll \frac{\mu^{d+1}}{t_2} \sum\limits_{\substack{|m_0|\le L,...,|m_d|\le L\\ (m_0,...,m_d)\not=(0,...,0)}} \min\left(\frac{N}{t_1},\left|\left| \alpha t_1\left(\frac{m_0}{t_2}+m_1c_1+...+m_dc_d\right) \right|\right|^{-1}\right)
\end{equation}
and note that if $(m_0,...,m_d)\not=(0,...,0)$, then the term 
$$
 t_1\left(\frac{m_0}{t_2}+m_1c_1+...+m_dc_d\right) 
$$
on the right-hand side of \eqref{E2alpha} is non-zero because $(c_1,...,c_d)$ is $k$-Diophantine. For an estimation of the integral in \eqref{average}, we now use the following lemma.

\begin{lemma} \label{intlemma}
Suppose that $0<A<B$, $K\ge 2$ and $x\not=0$. Then
\begin{equation} \label{intesti}
\int\limits_A^B \min\left(K,\left|\left|\alpha x \right|\right|^{-1}\right)d\alpha=O_{A,B}\left(\min\left\{K,\max\left\{1,|x|^{-1}\right\}\right\}\log K\right).
\end{equation}
\end{lemma}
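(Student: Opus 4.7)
\medskip

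\noindent\textbf{Proof plan for Lemma \ref{intlemma}.}

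My plan is to reduce everything to an integral of $\min(K,\|u\|^{-1})$ over a single period by a linear change of variable, evaluate the period integral by an elementary calculation, and then recombine with the trivial bound $K(B-A)$.

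First, since $\|-y\|=\|y\|$, I may assume $x>0$. I substitute $u=\alpha x$ to write
\[
I:=\int_A^B \min\!\left(K,\|\alpha x\|^{-1}\right)d\alpha
=\frac{1}{x}\int_{Ax}^{Bx}\min\!\left(K,\|u\|^{-1}\right)du.
\]
An explicit computation using $\min(K,\|u\|^{-1})\le K$ on the set where $\|u\|\le 1/K$ and $\min(K,\|u\|^{-1})=\|u\|^{-1}$ otherwise gives, for each integer $n$,
\[
\int_n^{n+1}\min\!\left(K,\|u\|^{-1}\right)du
=2\left(\int_0^{1/K} K\,du+\int_{1/K}^{1/2}u^{-1}\,du\right)
=2+2\log(K/2)=O(\log K),
\]
valid since $K\ge 2$.

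Next, the interval $[Ax,Bx]$ is covered by at most $\lceil (B-A)x\rceil+1$ integer-translates of $[0,1]$, so
\[
\int_{Ax}^{Bx}\min\!\left(K,\|u\|^{-1}\right)du=O\bigl(((B-A)x+1)\log K\bigr),
\]
and dividing by $x$ yields
\[
I=O\!\left(\bigl((B-A)+x^{-1}\bigr)\log K\right)=O_{A,B}\!\left(\max(1,|x|^{-1})\log K\right).
\]
Combining this with the trivial bound $I\le K(B-A)=O_{A,B}(K)$, and using $\log K\ge\log 2$, one gets
\[
I=O_{A,B}\!\left(\min\bigl\{K,\max(1,|x|^{-1})\bigr\}\log K\right),
\]
which is the assertion. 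Indeed, when $\max(1,|x|^{-1})\le K$ the first bound is already the required one, while when $\max(1,|x|^{-1})>K$ the min on the right equals $K$ and the trivial bound suffices.

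There is no real obstacle here; the whole argument is elementary. The only mild subtlety is making sure the implicit constant is allowed to depend on $A$ and $B$ (hence the $O_{A,B}$ notation), which is transparent because the $(B-A)$ factor arises from the length of the interval of integration and the $1/A$-type contribution never appears (the bound $x^{-1}$ comes cleanly from the Jacobian of the substitution, not from the endpoints).
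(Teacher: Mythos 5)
Your proof is correct and follows essentially the same route as the paper: the substitution $u=\alpha x$, periodicity of $\|u\|$ to get an $O(\log K)$ bound for the integral over one period, and the trivial bound $K(B-A)$ when $x$ is small. The only difference is organizational: you establish the two uniform bounds $O_{A,B}\left(\max(1,|x|^{-1})\log K\right)$ and $O_{A,B}(K)$ and take their minimum at the end, whereas the paper splits into three cases according to whether $x(B-A)\ge 1$, lies in $[1/K,1)$, or is smaller than $1/K$; the underlying estimates are the same.
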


\begin{proof} We confine ourselves to the case when $x>0$ since the case when $x<0$ is similar. By change of variables $\beta=\alpha x$, we get
\begin{equation} \label{change}
\int\limits_A^B \min\left(K,\left|\left| \alpha x \right|\right|^{-1}\right)d\alpha=\frac{1}{x}\int\limits_{xA}^{xB}  \min\left(K,\left|\left|\beta\right|\right|^{-1}\right)d\beta.
\end{equation}
By periodicity of the integrand, if $x(B-A)\ge 1$, we have
\begin{equation} \label{case1}
\frac{1}{x} \int\limits_{xA}^{xB}  \min\left(K,\left|\left|\beta\right|\right|^{-1}\right)d\beta\le \frac{[x(B-A)+1]}{x} \int\limits_0^1 \min\left(K,\left|\left|\beta\right|\right|^{-1}\right)d\beta =O_{A,B}\left(\log K\right).
\end{equation}
If $1/K\le x(B-A)<1$, then
\begin{equation} \label{case2}
\frac{1}{x} \int\limits_{xA}^{xB}  \min\left(K,\left|\left|\beta\right|\right|^{-1}\right)d\beta\le \frac{1}{x}\int\limits_{-x(B-A)/2}^{x(B-A)/2} \min\left(K,\left|\left|\beta\right|\right|^{-1}\right)d\beta =O_{A,B} \left(\frac{\log K}{x}\right). 
\end{equation}
If $0<x(B-A)< 1/K$, then trivially
\begin{equation} \label{case3}
\frac{1}{x}\int\limits_{xA}^{xB} \min\left(K,\left|\left|\beta\right|\right|^{-1}\right)d\beta=O_{A,B}(K).
\end{equation}
Combining \eqref{change}, \eqref{case1}, \eqref{case2} and \eqref{case3}, we deduce the claim when $x>0$, which completes the proof. 
\end{proof}

Now, employing \eqref{E2alpha} and Lemma \ref{intlemma}, we get
\begin{equation*} 
\begin{split}
& \int\limits_A^B E(\alpha;t_1,t_2)d\alpha\\ \ll & \frac{\mu^{d+1}}{t_2} \sum\limits_{\substack{|m_0|\le L,...,|m_d|\le L\\ (m_0,...,m_d)\not=(0,...,0)}} \min\left\{\frac{N}{t_1},\max\left\{1,
\left| t_1\left(\frac{m_0}{t_2}+m_1c_1+...+m_dc_d\right)\right|^{-1}\right\}\log N\right\} \\
\ll & \mu^{d+1}(\log N) \sum\limits_{\substack{|m_0|\le L,...,|m_d|\le L\\ (m_0,...,m_d)\not=(0,...,0)}} \min\left\{\frac{N}{t_1t_2},\max\left\{\frac{1}{t_2},
\left| t_1m_0+t_1t_2\left(m_1c_1+...+m_dc_d\right)\right|^{-1}\right\}\right\}. 
\end{split}
\end{equation*}
The contribution of $(m_1,...m_d)=(0,...,0)$ to the last line is bounded by $O\left(\mu^{d+1}L\log N\right)$, and the contribution of $(m_1,...,m_d)\not=0$ is bounded by 
\begin{equation*}
\begin{split}
\ll & \mu^{d+1}(\log N) \sum\limits_{\substack{|m_1|\le L,...,|m_d|\le L\\ (m_1,...,m_d)\not=(0,...,0)}} \sum\limits_{|m_0|\le L} \min\left\{\frac{N}{t_1t_2},\max\left\{\frac{1}{t_2},
\left| t_1m_0+t_1t_2\left(m_1c_1+...+m_dc_d\right)\right|^{-1}\right\}\right\}\\
\ll & \mu^{d+1}(\log N)\sum\limits_{\substack{|m_1|\le L,...,|m_d|\le L\\ (m_1,...,m_d)\not=(0,...,0)}} \min\left\{\frac{N}{t_1t_2},
\left|\left| t_1t_2\left(m_1c_1+...+m_dc_d\right)\right|\right|^{-1}\right\} +\mu^{d+1}L^{d+1}\log N.
\end{split}
\end{equation*}
So altogether, we have 
 \begin{equation*} 
\begin{split}
& \int\limits_A^B E(\alpha;t_1,t_2)d\alpha \ll \mu^{d+1}L^{d+1}\log N+\\
& \mu^{d+1}(\log N)\sum\limits_{\substack{|m_1|\le L,...,|m_d|\le L\\ (m_1,...,m_d)\not=(0,...,0)}} \min\left\{\frac{N}{t_1t_2},
\left|\left| t_1t_2\left(m_1c_1+...+m_dc_d\right)\right|\right|^{-1}\right\}.
\end{split}
\end{equation*}

Summing up over $t_1$ and $t_2$, writing $q=t_1t_2$ and using $d(q)=O\left(q^{\varepsilon}\right)$, $d(q)$ being the divisor function, we obtain
\begin{equation*}
\sum\limits_{t_1t_2\le Q} (t_1t_2)^{\varepsilon}\int\limits_A^B E(\alpha;t_1,t_2)d\alpha \\ 
\ll N^{\varepsilon} QL^{d+1}\mu^{d+1} + N^{\varepsilon} \mu^{d+1} R_d(L,...,L,Q,N),
\end{equation*}
where $R_d(L,...,L,Q,N)$ is defined as in \eqref{RH}. Combining this with \eqref{Rsplitesti}, we get
\begin{equation*}
\begin{split}
\sum\limits_{t_1t_2\le Q} (t_1t_2)^{\varepsilon}\int\limits_A^B E(\alpha;t_1,t_2)d\alpha
\ll N^{\varepsilon} L^d \mu^{d+1} \left(L+\frac{LN}{X^{1/k}}+Q+X\right)\log(2QNX)
\end{split}
\end{equation*}
for any positive integer $X$. Choosing $X:=\left[(LN)^{1-1/(k+1)}\right]$, we deduce that
\begin{equation*} 
\sum\limits_{t_1t_2\le Q} (t_1t_2)^{\varepsilon}\int\limits_A^B E(\alpha;t_1,t_2)d\alpha \ll  N^{\varepsilon} 
L^{d}\mu^{d+1} \left(L+Q+(LN)^{1-1/(k+1)}\right)
\end{equation*}
which implies \eqref{average} upon 
recalling $\mu:=N^{\varepsilon-\gamma_{d,k}}$, $Q:=N^{\varepsilon}$, $L:=Q^3\mu^{-1}$ and \eqref{gammadef}.

\end{document}